\theoremstyle{definition}
\newtheorem{theorem}{Theorem}%[section]
\newtheorem{lemma}[theorem]{Lemma}
\newtheorem{corollary}[theorem]{Corollary}
\newtheorem*{theorem*}{Theorem}
\newtheorem*{conjecture*}{Conjecture}
\theoremstyle{remark}
\begin{document}

\title{Hodge-Riemann relations for Potts model partition functions}

\author{Petter Br\"and\'en and June Huh}

\address{Department of Mathematics, KTH, Royal Institute of Technology, Stockholm, Sweden.}
\email{pbranden@kth.se}

%\thanks{}
\address{Institute for Advanced Study, Princeton, NJ, USA.}
\email{junehuh@ias.edu}
\begin{abstract}
We prove that the Hessians of nonzero partial derivatives of the (homogenous) multivariate Tutte polynomial of any matroid have exactly one positive eigenvalue on the positive orthant when $0<q\leq 1$. Consequences are proofs of the strongest conjecture of Mason and negative dependence properties for $q$-state Potts model partition functions. 
\end{abstract}

\maketitle
\thispagestyle{empty}
\section{Introduction}
Several conjectures have been made regarding unimodality and log-concavity of sequences arising in matroid theory.   Only recently have some of these been solved using combinatorial Hodge theory \cite{AHK,HSW}. A conjecture that has resisted the approach of \cite{AHK} is the strongest conjecture of Mason regarding independent sets in a matroid \cite{Mason}. The purpose of this paper is to give a self-contained proof of the strongest conjecture avoiding, but inspired by, Hodge theory. We prove that the Hessian of the homogenous multivariate Tutte polynomial (or the $q$-state Potts model partition function) of a matroid has exactly one positive eigenvalue on the positive orthant when $0<q \le 1$. 
In a forthcoming paper we will take a more general approach and see that the results proved in this paper fit into a wider context\footnote{In related  forthcoming papers, Anari, Liu, Gharan and Vinzant have independently developed methods  that overlap with our work. In particular, they also prove Mason's conjecture \eqref{MasonThird}.}. 

Let $n$ be an integer larger than $1$,
and let $\mathrm{M}$ be a matroid on $[n]=\{1,\ldots,n\}$. 
Mason \cite{Mason} offered the following three conjectures of increasing strength.
Several authors studied correlations in matroid theory partly in pursuit of these conjectures \cite{SW,Wagner08,BBL,KN10,KN11}.

\begin{conjecture*}
For any $n$-element matroid $\mathrm{M}$ and any positive integer $k$,
\begin{enumerate}[(1)]\itemsep 5pt
\item\label{MasonFirst} %The sequence $\{i_k\}_{0 \le k \le d}$ is log-concave:
$
I_k(\mathrm{M})^2 \ge I_{k-1}(\mathrm{M})I_{k+1}(\mathrm{M}),
$
\item\label{MasonSecond} %The sequence $\{k!i_k\}_{0 \le k \le d}$ is log-concave:
$
I_k(\mathrm{M})^2 \ge \frac{k+1}{k} I_{k-1}(\mathrm{M})I_{k+1}(\mathrm{M}),
$
\item\label{MasonThird} %The sequence $\{{n\choose k}(denominator!)i_k\}_{0 \le k \le d}$ is log-concave:
$
I_k(\mathrm{M})^2 \ge \frac{k+1}{k}\frac{n-k+1}{n-k} I_{k-1}(\mathrm{M})I_{k+1}(\mathrm{M}),
$
\end{enumerate}
where $I_k(\mathrm{M})$ is the number of $k$-element independent sets of $\mathrm{M}$.  
\end{conjecture*}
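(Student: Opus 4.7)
Since Mason's conjecture (3) implies (2) implies (1), it suffices to prove (3). The key observation is that (3) is exactly the \emph{ultra-log-concavity} of the sequence $(I_k(\mathrm{M}))_{k=0}^n$, i.e., log-concavity of the normalized sequence $(I_k(\mathrm{M})/\binom{n}{k})$, because a short computation gives $\binom{n}{k}^2/(\binom{n}{k-1}\binom{n}{k+1}) = \tfrac{k+1}{k}\cdot\tfrac{n-k+1}{n-k}$. The plan is to derive this ultra-log-concavity from the announced main theorem on Hessians of nonzero partial derivatives of the homogeneous multivariate Tutte polynomial.

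To bridge the two, I would introduce the multivariate independence generating polynomial
\[
F_{\mathrm{M}}(w_0, w_1, \ldots, w_n) = \sum_{S \text{ independent in } \mathrm{M}} w_0^{\,n - |S|}\prod_{e \in S} w_e,
\]
homogeneous of degree $n$ in $n+1$ variables. This is the value at $q = 0$ of the homogeneous multivariate Tutte polynomial
\[
Z_{\mathrm{M}}(q;\, w_0, \ldots, w_n) = \sum_{A \subseteq [n]} q^{\,|A|-r(A)}\, w_0^{\,n-|A|}\prod_{e \in A} w_e,
\]
because the nonnegative exponent $|A| - r(A)$ vanishes precisely on independent sets. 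By the main theorem, for every $q \in (0,1]$ and every nonzero iterated partial derivative $\partial^\alpha Z_{\mathrm{M}}(q;\cdot)$, the Hessian has exactly one positive eigenvalue on the positive orthant. Sending $q \to 0^+$ and using continuity of the Hessian in $q$ together with upper semi-continuity of the number of positive eigenvalues, the Hessian of every nonzero $\partial^\alpha F_{\mathrm{M}}$ has at most one positive eigenvalue on the positive orthant. A matching lower bound follows from the Euler identity $\mathbf{w}^{\top} H_g(\mathbf{w})\, \mathbf{w} = d(d-1)\, g(\mathbf{w})$ applied to $g = \partial^\alpha F_{\mathrm{M}}$: each such nonzero $g$ has nonnegative monomial coefficients and degree $d \ge 2$, so at any positive $\mathbf{w}$ where it is nonzero, the Hessian has at least one positive direction. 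Hence $F_{\mathrm{M}}$ satisfies the Hodge--Riemann signature condition, i.e., $F_{\mathrm{M}}$ is a Lorentzian polynomial in the sense of Br\"and\'en--Huh.

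The diagonal substitution $w_e \mapsto x$ for $e \in [n]$ and $w_0 \mapsto y$ is a coordinate change with nonnegative matrix entries, and the class of Lorentzian polynomials is closed under such substitutions. Thus the bivariate form
\[
f(x,y) = F_{\mathrm{M}}(y, x, \ldots, x) = \sum_{k=0}^{n} I_k(\mathrm{M})\, x^k\, y^{n-k}
\]
is Lorentzian. A direct computation of the $2\times 2$ Hessian of $\partial_x^i \partial_y^{n-2-i} f$ shows that the Lorentzian condition for a binary form $\sum c_k\, x^k y^{n-k}$ is precisely the ultra-log-concave inequality $c_k^2 \ge \tfrac{k+1}{k}\tfrac{n-k+1}{n-k}\, c_{k-1} c_{k+1}$. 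Applied to $f$, this is Mason's conjecture (3).

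The principal obstacle in this plan is the main Hessian signature theorem itself, which is the central technical contribution of the paper and is taken as a black box here; proving it plausibly requires an inductive argument combining deletion--contraction for the multivariate Tutte polynomial with the fact that $Z_{\mathrm{M}}(1;\cdot)$ factors as a product of linear forms (so is manifestly Lorentzian), together with a monotonicity/interlacing argument in $q \in (0,1]$. A secondary subtle point is the passage from $q > 0$ to $q = 0$: semi-continuity alone only bounds the number of positive eigenvalues from above, so one must separately prevent the Hessian from becoming negative semidefinite in the limit, which the Euler identity handles uniformly for every nonzero iterated partial derivative of $F_{\mathrm{M}}$.
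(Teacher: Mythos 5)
Your derivation of Mason's inequality (3) from the announced Hessian theorem is correct in substance and follows the same skeleton as the paper (main theorem $\Rightarrow$ ultra-log-concavity $\Rightarrow$ the binomial form of (3)), but the two middle steps are routed differently. The paper first proves ultra-log-concavity of $q\mapsto \mathrm{Z}^m_\mathrm{M}(q,w)$ for $0<q\le 1$ (Corollary \ref{ULC}) by applying the Cauchy--Schwarz-type inequality of Lemma \ref{fundLor}(2) to the Hessian $\mathscr{H}$ of $\partial_0^{n-m-1}\mathrm{Z}_\mathrm{M}$ with the vectors $u=w$ and $v=e_0$, converting the three quadratic forms via Euler's formula and letting $w_0\to 0$; only afterwards does it pass to $q\to 0$ through the rescaling $f^m_\mathrm{M}(w)=\lim_{q\to 0}\mathrm{Z}^m_\mathrm{M}(q,qw)$ (Corollary \ref{MasonIII}). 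You instead take $q\to 0$ first (your normalization $q^{|A|-\mathrm{rk}(A)}$ is the paper's $q^{-\mathrm{rk}(A)}$ after $w_e\mapsto qw_e$, a positive diagonal substitution that preserves Hessian signatures, so this is fine), and your semicontinuity-plus-Euler argument correctly yields that every nonzero $\partial^\alpha F_\mathrm{M}$ has a Hessian with exactly one positive eigenvalue on the positive orthant. The one step you get essentially for free from outside the paper is the passage to the bivariate specialization via ``Lorentzian polynomials are closed under nonnegative linear substitutions'': that is a genuine theorem of the forthcoming Br\"and\'en--Huh theory, not available here and not a triviality, since the Hessian of a \emph{derivative} of the specialized polynomial is not simply a compression $P^T\mathscr{H}P$ of a Hessian of a derivative of $F_\mathrm{M}$ (the quadratic-level condition alone is not preserved under taking sums). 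You can avoid importing it by running the paper's own two-line argument on $F_\mathrm{M}$ directly: apply Lemma \ref{fundLor}(2) to the Hessian of $\partial_0^{n-m-1}F_\mathrm{M}$ at $w=(t,1,\ldots,1)$, use Euler's formula, and let $t\to 0$; this gives (3) without any Lorentzian machinery. Finally, your parenthetical guess about how the main theorem is proved (deletion--contraction and interlacing in $q$) does not match the paper's actual induction (the kernel Lemma \ref{LefschetzInduction}, a degree-two base case via parallel classes, and continuity from the $q=1$ elementary symmetric case), but since you treat it as a black box this does not affect your argument.
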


Conjecture (\ref{MasonFirst}) 
was proved 
 in \cite{AHK}, and Conjecture (\ref{MasonSecond}) was proved in \cite{HSW}.
Note that Conjecture (\ref{MasonThird}) may be written
 \[
\frac{I_k(\mathrm{M})^2}{{n \choose k}^2} \ge \frac{I_{k+1}(\mathrm{M})}{{n \choose k+1}}\frac{I_{k-1}(\mathrm{M})}{{n \choose k-1}}, 
\]
and the equality holds when all $(k+1)$-subsets of $[n]$ are independent in $\mathrm{M}$.
Conjecture  (\ref{MasonThird}) is known to hold when 
$n$ is at most $11$ or $k$ is at most $5$  \cite{KN11}.  
We refer to \cite{Seymour, Dowling, Mahoney, Zhao, HK,HS, Lenz} for other partial results.
We prove Conjecture (\ref{MasonThird})  in Corollary \ref{MasonIII} by uncovering concavity properties of  the multivariate Tutte polynomial of $\mathrm{M}$.

\subsection*{Acknowledgements}

Petter Br\"and\'en is a Wallenberg Academy Fellow supported by the Knut and Alice Wallenberg Foundation and Vetenskapsr\aa det. 
June Huh was supported by NSF Grant DMS-1638352 and the Ellentuck Fund.
%June Huh was partially supported by NSF Grant DMS-1128155.

\section{The Hessian of the multivariate Tutte polynomial}

Let $\mathrm{rk}_\mathrm{M} : 2^{[n]} \rightarrow \mathbb{Z}_{\geq 0}$ be the rank function of $\mathrm{M}$.
For a nonnegative integer $k$ and a positive real parameter $q$, consider the degree $k$ homogeneous polynomial in $n$ variables
\[
\mathrm{Z}^k_{\mathrm{M}}=\mathrm{Z}^k_{\mathrm{M}}(q,w_1,\ldots,w_n)= \sum_{A} q^{-\mathrm{rk}_\mathrm{M}(A)} \prod_{i \in A}w_i, %\qquad w=(w_1,\ldots,w_n).
\]
where the sum is over all $k$-element subsets $A$ of $[n]$. 
We define the \emph{homogeneous multivariate Tutte polynomial} of $\mathrm{M}$ by
\[
\mathrm{Z}_\mathrm{M}= 
\mathrm{Z}_{\mathrm{M}}(q,w)= \sum_{k=0}^n
\mathrm{Z}_\mathrm{M}^{n-k} w_0^{k},
\]
which is a homogeneous polynomial of degree $n$ in $w=(w_0,w_1,\ldots,w_n)$. 
When $w_0=1$, the funtion $\mathrm{Z}_\mathrm{M}$ agrees with the partition function of the $q$-state Potts model, or the random cluster model \cite{Pem,Sokal,Grimmett}.
The \emph{Hessian} of $\mathrm{Z}_\mathrm{M}$  is the matrix 
$$
\mathscr{H}_{\mathrm{Z}_\mathrm{M}}(w)= \left(\frac {\partial^2 \mathrm{Z}_\mathrm{M}}{\partial w_i \partial w_j}\right)_{i,j=0}^n. 
$$
When $w \in \mathbb{R}^{n+1}_{>0}$, the largest eigenvalue of $\mathscr{H}_{\mathrm{Z}_\mathrm{M}}$ is simple and positive 
 by the Perron-Frobenius theorem.
We prove the following analogue of the Hodge-Riemann relations for $\mathrm{Z}_\mathrm{M}$.

\begin{theorem}\label{qHR}
The Hessian of  $\mathrm{Z}_\mathrm{M}$ has exactly one positive eigenvalue for all $w \in \mathbb{R}^{n+1}_{>0}$ and $0<q \le 1$.
\end{theorem}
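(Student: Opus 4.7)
The plan is to argue by strong induction on $n$, the ground-set size of $\mathrm{M}$, with the base case $n \le 2$ handled by a direct Hessian computation for the finitely many matroids on at most two elements. The two structural ingredients for the inductive step (valid for $n \ge 3$) are the \emph{contraction identity}
\[
\frac{\partial \mathrm{Z}_\mathrm{M}}{\partial w_i}(q,w) \;=\; q^{-\mathrm{rk}_\mathrm{M}(\{i\})}\,\mathrm{Z}_{\mathrm{M}/i}\bigl(q,w_0,\ldots,\widehat{w_i},\ldots,w_n\bigr),\qquad i\in[n],
\]
which presents each first partial derivative of $\mathrm{Z}_\mathrm{M}$ in a matroid-element variable as a positive scalar multiple of the homogeneous multivariate Tutte polynomial of the contracted matroid $\mathrm{M}/i$ (so that the inductive hypothesis applies), and the second-order Euler identity
\[
(n-2)\,\mathscr{H}_{\mathrm{Z}_\mathrm{M}}(w) \;=\; \sum_{i=0}^{n} w_i\, \mathscr{H}_{\partial_i\mathrm{Z}_\mathrm{M}}(w),
\]
which realizes the Hessian of $\mathrm{Z}_\mathrm{M}$ on the positive orthant as a conic combination of the Hessians of its first partial derivatives.

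Granted these identities, I would proceed as follows. By the inductive hypothesis, for each $i\in[n]$ the $(n+1)\times(n+1)$ matrix $\mathscr{H}_{\partial_i\mathrm{Z}_\mathrm{M}}(w)$ (which has a zero row and column in position $i$ since $\partial_i\mathrm{Z}_\mathrm{M}$ does not depend on $w_i$) has exactly one positive eigenvalue on the positive orthant. Moreover $w$ itself is a strictly positive direction common to all these Hessians, since the quadratic Euler identity applied to $\partial_i\mathrm{Z}_\mathrm{M}$ (homogeneous of degree $n-1$) gives $w^{\top}\mathscr{H}_{\partial_i\mathrm{Z}_\mathrm{M}}(w)\,w=(n-1)(n-2)\,\partial_i\mathrm{Z}_\mathrm{M}(w)>0$ whenever $\partial_i\mathrm{Z}_\mathrm{M}\ne 0$. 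What remains is to deduce from these inputs that the conic combination $\sum_{i=0}^n w_i\,\mathscr{H}_{\partial_i\mathrm{Z}_\mathrm{M}}(w)=(n-2)\mathscr{H}_{\mathrm{Z}_\mathrm{M}}(w)$ also has signature $(1,\le 0)$. The summand at $i=0$ (for which $\partial_0\mathrm{Z}_\mathrm{M}$ is not itself a matroid Tutte polynomial) will not be handled in isolation but will be absorbed into this joint positivity analysis.

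The hard part will be precisely this ``conic combination'' step, which amounts to a \emph{Hodge-index lemma}: a conic combination of symmetric matrices, each of signature $(1,\le 0)$ and all sharing a strictly positive direction, again has signature $(1,\le 0)$, \emph{provided} the summands arise as the Hessians $\mathscr{H}_{\partial_i g}$ of the first partial derivatives of a common polynomial $g$. Without this compatibility the statement is false, as two $2\times 2$ matrices of signature $(1,1)$ with a common strictly positive direction can already sum to a positive multiple of the identity. The proof must therefore exploit the symmetric three-tensor structure $\partial_i\partial_j\partial_k\mathrm{Z}_\mathrm{M}=\partial_{\sigma(i)}\partial_{\sigma(j)}\partial_{\sigma(k)}\mathrm{Z}_\mathrm{M}$ coming from the equality of mixed partials, presumably by combining the reverse Cauchy--Schwarz inequalities supplied by the inductive hypothesis for each summand with a Perron--Frobenius-type argument on the positive orthant, using the matroidal identities to align the local positive directions of the $\mathscr{H}_{\partial_i\mathrm{Z}_\mathrm{M}}(w)$ along the global direction $w$. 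The hypothesis $0<q\le 1$ should enter exactly at this point, since at $q=1$ one has $\mathrm{Z}_\mathrm{M}(1,w)=\prod_{i=1}^n(w_0+w_i)$, whose stability makes the Hodge-index conclusion automatic, and the requirement $q\le 1$ is then the monotonicity that propagates the signature property to all $0<q\le 1$.
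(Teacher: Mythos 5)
Your outline correctly identifies the contraction identity, the second-order Euler identity, and the fact that a conic combination of symmetric matrices of signature $(1,\le0)$ sharing a positive direction need not itself have that signature (your $2\times2$ counterexample is exactly right). But the proposal stops precisely at the step you yourself flag as the hard part, and the paper's way past that step is genuinely different from what you gesture at.

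The paper does not prove a conic-combination signature lemma. It splits the conclusion into \emph{nonsingularity} and \emph{signature}, and the Euler identity is used only for the first half. In Lemma~\ref{LefschetzInduction}, the identity $(d-2)\mathscr{H}_F=\sum_i w_i\mathscr{H}_{\partial_iF}$ is combined with the observation that for $z\in\ker\mathscr{H}_F$ one has $w^T\mathscr{H}_{\partial_iF}z=0$ for every $i$ (another application of Euler); since each $\mathscr{H}_{\partial_iF}$ is Lorentzian and $w^T\mathscr{H}_{\partial_iF}w>0$, each $z^T\mathscr{H}_{\partial_iF}z\le0$, and the conic identity $0=\sum_i w_i\,z^T\mathscr{H}_{\partial_iF}z$ then forces $z\in\bigcap_i\ker\mathscr{H}_{\partial_iF}$. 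Combined with the inductive nonsingularity of the derivative Hessians this gives $\ker\mathscr{H}_F=0$. The signature is then obtained not by any monotonicity in $q$ but by a continuity-of-eigenvalues argument: for a suitably degenerate limit of log-concave sequences $c$ at $q=1$, the polynomial becomes an elementary symmetric polynomial whose Hessian at the all-ones vector is $\mathrm{J}_{n+1}$ (zero diagonal, all off-diagonal ones), manifestly nonsingular with one positive eigenvalue; since nonsingularity holds throughout the connected parameter region, the signature $(1,n)$ propagates to all $(c,q,w)$. The hypothesis $0<q\le1$ is in fact consumed in the degree-two base case (Lemma~\ref{deg2}), via Cauchy--Schwarz at $q=1$ and monotonicity down to $q=0$, not in the inductive step as you predict.

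There is also a structural gap in your induction. You induct on ground-set size via contraction, which gives a handle on $\partial_i\mathrm{Z}_\mathrm{M}$ for $i\in[n]$, but leaves $\partial_0\mathrm{Z}_\mathrm{M}$ outside the inductive class, and ``absorbing it into the joint analysis'' is not a proof. The paper avoids this by inducting on the degree of the derivative $\partial^\alpha\mathrm{Z}_{\mathrm{M},c}$ for a \emph{fixed} matroid, working inside the enlarged family $\mathrm{Z}_{\mathrm{M},c}$ parametrized by strictly log-concave sequences $c$; this family is closed under all partial derivatives including $\partial_0$, which is exactly why Theorem~\ref{qHR} is deduced from the more general Theorem~\ref{cqHR} rather than proved directly.
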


It follows that the Hessian of $\log \mathrm{Z}_\mathrm{M}$ is negative semidefinite on $\mathbb{R}^{n+1}_{\ge 0}$, and hence
$\log \mathrm{Z}_\mathrm{M}$ is concave on $\mathbb{R}^{n+1}_{\ge 0}$ when $0 < q \le 1$ \cite[Lemma 2.7]{AGV}.
We deduce Theorem \ref{qHR} from the following more precise statement.
Let  $c=(c_0,c_1,\ldots,c_n)$ be a sequence of $n+1$ positive real numbers.
We say that $c$ is  \emph{strictly log-concave} if 
\[
c_m^2 >c_{m-1}c_{m+1}\ \ \text{for} \ \ 0 < m <n.
\]
For any strictly log-concave sequence $c$ as above,  set
\[
\mathrm{Z}_{\mathrm{M},c}=
\sum_{k=0}^n
c_{n-k} \hspace{0.5mm}\mathrm{Z}_\mathrm{M}^{n-k} w_0^{k}.
\]
For $\alpha \in \mathbb{Z}_{\ge 0}^{n+1}$, we write
$\partial_i = \frac {\partial} {\partial w_i}$  and $\partial^\alpha = \partial_0^{\alpha_0} \partial_1^{\alpha_1}\cdots \partial_n^{\alpha_n}$.

\begin{theorem}\label{cqHR}
If $\partial^\alpha \mathrm{Z}_{\mathrm{M},c}$ is not identically zero, then
\begin{enumerate}[(i)]\itemsep 5pt
\item\label{FirstPart} the Hessian of  $\partial^\alpha \mathrm{Z}_{\mathrm{M},c}$ is nonsingular  for all $w \in \mathbb{R}^{n+1}_{>0}$ and  $0<q \le 1$, and
\item\label{SecondPart} the Hessian of  $\partial^\alpha \mathrm{Z}_{\mathrm{M},c}$ has exactly one positive eigenvalue  for all $w \in \mathbb{R}^{n+1}_{>0}$ and  $0<q \le 1$.
\end{enumerate}
\end{theorem}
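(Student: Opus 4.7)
The plan is to prove (i) and (ii) simultaneously by induction on the degree $d := n-|\alpha|$ of $g := \partial^\alpha \mathrm{Z}_{\mathrm{M},c}$. The Hessian is vacuous for $d \le 1$, so the interesting range is $d \ge 2$, with the base case $d=2$ carrying the main combinatorial content and the step $d \ge 3$ propagating via Euler's identity.

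\emph{Base case $d=2$.} Here $|\alpha|=n-2$ and $g$ is a quadratic form whose Hessian is twice its coefficient matrix. Setting $\beta=\alpha_0$ and $\alpha'=(\alpha_1,\ldots,\alpha_n)$, only the three indices $k\in\{\beta,\beta+1,\beta+2\}$ survive the differentiation, giving
\[
g(w_0,\ldots,w_n) \;=\; A\,w_0^2 + w_0\,L(w_1,\ldots,w_n) + Q(w_1,\ldots,w_n),
\]
where $A$, $L$, $Q$ are positive multiples of $c_{n-\beta-2}\,\partial^{\alpha'}\mathrm{Z}_{\mathrm{M}}^{n-\beta-2}$, $c_{n-\beta-1}\,\partial^{\alpha'}\mathrm{Z}_{\mathrm{M}}^{n-\beta-1}$, and $c_{n-\beta}\,\partial^{\alpha'}\mathrm{Z}_{\mathrm{M}}^{n-\beta}$ respectively. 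A Schur complement on the resulting block Hessian reduces the signature-$(1,n)$ claim to two ingredients: (a) the Hessian of $Q$, an $n\times n$ matrix, has signature $(1,n-1)$, which is a smaller instance of the theorem handled by a subsidiary induction on the number of variables; and (b) the strict Newton-type inequality
\[
c_{n-\beta-1}^{\,2}\,\bigl(\partial^{\alpha'}\mathrm{Z}_{\mathrm{M}}^{n-\beta-1}(w)\bigr)^{2} \;>\; c_{n-\beta-2}\,c_{n-\beta}\,\bigl(\partial^{\alpha'}\mathrm{Z}_{\mathrm{M}}^{n-\beta-2}(w)\bigr)\bigl(\partial^{\alpha'}\mathrm{Z}_{\mathrm{M}}^{n-\beta}(w)\bigr)
\]
for all $w\in\mathbb{R}_{>0}^{n+1}$. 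Strict log-concavity of $c$ absorbs the coefficient ratio, leaving a pure $q$-deformed Newton inequality among the three homogeneous polynomials $\partial^{\alpha'}\mathrm{Z}_{\mathrm{M}}^{n-\beta-j}$, $j\in\{0,1,2\}$.

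\emph{Inductive step $d \ge 3$.} Assume both parts for all $\alpha'$ with $|\alpha'|=|\alpha|+1$. Euler's identity gives
\[
(d-2)\,\mathscr{H}_g(w) \;=\; \sum_{k=0}^n w_k\,\mathscr{H}_{\partial^{\alpha+e_k}\mathrm{Z}_{\mathrm{M},c}}(w),
\]
exhibiting $\mathscr{H}_g(w)$ as a positive combination of Hessians each of signature $(1,n)$. Because $g$ has nonnegative coefficients, $\mathscr{H}_g(w)$ is entrywise nonnegative on $\mathbb{R}_{>0}^{n+1}$ and admits a strictly positive Perron eigenvector for a positive eigenvalue, giving at least one positive eigenvalue. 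Combining this with the inductive hypothesis and the M-convexity of the support of $\mathrm{Z}_{\mathrm{M},c}$ (immediate from the matroid axioms), a Lorentzian-polynomial-style propagation argument bounds the positive inertia of $\mathscr{H}_g$ by one, yielding (ii). For (i), nonsingularity follows from a connectedness argument over $(w,q)\in\mathbb{R}_{>0}^{n+1}\times(0,1]$: the signature can change only where the Hessian is singular, and the strict inequality of the base case, cascaded up, rules this out.

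\emph{Main obstacle.} The crux is inequality (b) in the base case -- a $q$-deformed Newton inequality for the coefficients of the multivariate Tutte polynomial. At $q=1$ the matroid structure collapses (since $q^{-\mathrm{rk}_{\mathrm{M}}(A)}\equiv 1$) and $\mathrm{Z}_{\mathrm{M}}=\prod_{i=1}^n(w_0+w_i)$ is a product of linear forms, whence (b) reduces to the classical Newton inequality for elementary symmetric polynomials. Extending to $0<q<1$ will require an interpolation or monotonicity argument that blends the submodularity of $\mathrm{rk}_{\mathrm{M}}$ with the convexity of $q\mapsto q^{-r}$. The hypothesis $q\le 1$ is decisive: for $q>1$ the weights $q^{-\mathrm{rk}_{\mathrm{M}}(A)}$ instead favor independent subsets, and the inequality can fail.
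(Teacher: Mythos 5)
Your outline identifies the right induction (on degree, with a quadratic base case and an Euler-identity step), but at both of the places where the actual mathematical content lives the argument is missing or would not go through as stated. In the base case, your Schur complement reduces the signature claim to your inequality (b), but for that reduction you need the quadratic comparison to hold for \emph{all} real directions $v$ (equivalently, negativity of $\mathscr{H}_Q-\tfrac{1}{2A}\,\ell\ell^T$ on all of $\mathbb{R}^n$), not merely for $w\in\mathbb{R}^{n+1}_{>0}$ as you quantify it; the paper's corresponding inequality \eqref{discr} is proved for all $w\in\mathbb{R}^n\setminus\{0\}$. More importantly, neither (a) nor (b) is proved: (a) is not literally ``a smaller instance of the theorem'' (the theorem always involves the cone variable $w_0$ and a positive strictly log-concave $c$), and (b) is exactly the matroid-specific heart of the result, which you defer to a hoped-for ``interpolation or monotonicity argument.'' The paper's proof of this step is concrete: one reduces by contraction to $S=\emptyset$, rescales $w_j\mapsto qw_j$ to get $\mathrm{Z}^1_\mathrm{M}=e_1(E)$ and $\mathrm{Z}^2_\mathrm{M}=e_2(E)-(1-q)\sum_i e_2(E_i)$ over the parallel classes $E_i$, uses strict log-concavity of $c$ to reduce to $t=1$ with non-strict inequality plus a separate strictness analysis, and then uses monotonicity in $q$ to reduce to $q=0$, where the claim is the Cauchy--Schwarz inequality applied blockwise to the parallel classes. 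None of this (in particular the role of parallel classes, which is where $q\le 1$ enters) appears in your sketch.

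The inductive step has a comparable gap. From the Euler identity, $\mathscr{H}_g$ is a positive combination of Hessians of signature $(1,n)$, but such a combination can perfectly well have several positive eigenvalues, so ``a Lorentzian-polynomial-style propagation argument bounds the positive inertia by one'' is not a proof; and the M-convexity of the support of $\mathrm{Z}_{\mathrm{M},c}$ carries no information, since that support is the same for every matroid on $[n]$. Your treatment of (i) is also circular in ordering: a connectedness argument shows the inertia cannot change \emph{where the Hessian is nonsingular}, so it cannot itself deliver nonsingularity. What is needed, and what the paper supplies, is (1) the kernel identity $\ker\mathscr{H}_F=\bigcap_i\ker\mathscr{H}_{\partial_iF}$ (Lemma \ref{LefschetzInduction}), valid because each $\mathscr{H}_{\partial_iF}$ has exactly one positive eigenvalue by induction, which yields nonsingularity (i); and (2) an explicit anchor point for the continuity argument: degenerating the sequence $c$ at $q=1$ and $w=(1,\ldots,1)^T$ turns $F$ into an elementary symmetric polynomial whose Hessian is a positive multiple of $\mathrm{J}_{n+1}$, visibly nonsingular with one positive eigenvalue, after which constancy of the inertia over the connected parameter space gives (ii) everywhere. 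Your proposal contains neither the kernel lemma nor the anchor computation, so both (i) and (ii) in the step $d\ge 3$ remain unestablished.
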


Theorem \ref{qHR} can be deduced from Theorem \ref{cqHR} for $\alpha=0$ by approximating the constant sequence $1$ by strictly log-concave sequences. 
Theorem \ref{cqHR} will be proved by induction on the degree of $\partial^\alpha \mathrm{Z}_{\mathrm{M},c}$. 
For undefined matroid terminologies, see \cite{Oxley}.

\begin{lemma}\label{fundLor}
Let $A=(a_{ij})_{i,j=1}^n$ be a symmetric matrix with at least one positive eigenvalue. The following statements are equivalent.
\begin{enumerate}[(1)]\itemsep 5pt
\item $A$ has exactly one positive eigenvalue. 
\item For any $u, v \in \mathbb{R}^{n}$ with $u^TAu > 0$,  $(u^TAv)^2 \geq (u^TAu)(v^TAv).$ 
\item There is a vector $u \in \mathbb{R}^{n}$ with $u^TAu >0$, such that $(u^TAv)^2 \geq (u^TAu)(v^TAv)$ for all $v  \in \mathbb{R}^{n}$.
%\item The quadratic polynomial
%$
%w^TA w= \sum_{i,j=0}^n a_{ij} w_i w_j
%$
%is stable. 
\end{enumerate}
\end{lemma}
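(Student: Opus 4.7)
I plan to establish the cycle $(1)\Rightarrow(2)\Rightarrow(3)\Rightarrow(1)$. The essential content in both nontrivial directions is a single signature observation: for a vector $u$ with $u^T A u > 0$, the $A$-orthogonal hyperplane $u^\perp \defeq \{w \in \mathbb{R}^n : u^T A w = 0\}$ carries a negative semidefinite restriction of the bilinear form $B(x,y) \defeq x^T A y$ if and only if $A$ has at most one positive eigenvalue.

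For $(1)\Rightarrow(2)$, fix $u$ with $u^T A u > 0$. Since $u \notin u^\perp$, we obtain the decomposition $\mathbb{R}^n = \mathbb{R} u \oplus u^\perp$. If some $w \in u^\perp$ satisfied $w^T A w > 0$, then in the basis $\{u,w\}$ the matrix of $B$ restricted to $\mathrm{span}(u,w)$ would be diagonal with two positive entries, forcing $A$ to have at least two positive eigenvalues and contradicting (1). Hence $B|_{u^\perp}$ is negative semidefinite. Writing an arbitrary $v = c u + w$ with $w \in u^\perp$, a direct expansion using $u^T A w = 0$ yields $(u^T A v)^2 - (u^T A u)(v^T A v) = -(u^T A u)(w^T A w) \geq 0$, which is the inequality in (2).

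The implication $(2)\Rightarrow(3)$ is immediate, since the standing hypothesis that $A$ has at least one positive eigenvalue produces some $u$ with $u^T A u > 0$. For $(3)\Rightarrow(1)$, I argue by contradiction: suppose $A$ had two or more positive eigenvalues, so there would be a $2$-dimensional subspace $V$ on which $B$ is positive definite. Since $u^\perp$ is a hyperplane, the intersection $V \cap u^\perp$ contains a nonzero vector $v$, and then $u^T A v = 0$ while $v^T A v > 0$, contradicting the inequality postulated in (3).

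I do not anticipate a serious obstacle; the only point that requires a little care is the possibility that $A$ is singular, which is handled uniformly by working with the form $B$ directly rather than with $A^{-1}$ or with a quotient by $\ker A$. The argument is essentially the standard reverse Cauchy--Schwarz inequality for a form of Lorentzian signature, adapted so that the hypothesis on eigenvalues of $A$ is used only through the existence of the decomposition $\mathbb{R} u \oplus u^\perp$ and the negative semidefiniteness of $B|_{u^\perp}$.
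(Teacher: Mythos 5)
Your proof is correct, and it follows the same basic route as the paper: the same cycle $(1)\Rightarrow(2)\Rightarrow(3)\Rightarrow(1)$, with everything resting on the hyperplane $\{w : u^TAw=0\}$ and the fact that a two-dimensional subspace on which the form is positive definite forces at least two positive eigenvalues. The differences are in the packaging. For $(1)\Rightarrow(2)$ the paper computes the discriminant of the quadratic $t\mapsto Q(tu+v)$ and invokes Cauchy's interlacing theorem to rule out a positive definite plane, whereas you complete the square via the decomposition $\mathbb{R}^n=\mathbb{R}u\oplus u^\perp$; these are the same computation in different clothes, though your version makes the identity $(u^TAv)^2-(u^TAu)(v^TAv)=-(u^TAu)(w^TAw)$ explicit, which is slightly cleaner. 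For $(3)\Rightarrow(1)$ you avoid citing interlacing by intersecting a two-dimensional eigenspace-spanned positive definite subspace with the hyperplane, which is a genuinely more self-contained argument for that direction. The one place you should add a word of justification is the step in $(1)\Rightarrow(2)$ asserting that positive definiteness of the form on a $2$-plane forces two positive eigenvalues: this is exactly where the paper appeals to Cauchy's interlacing theorem (equivalently Courant--Fischer or Sylvester's law of inertia), and as written you state it as if it were immediate. With that citation added, your argument is complete and buys a marginally more elementary proof, at the cost of no real extra length.
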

\begin{proof}
Since $A$ has a positive eigenvalue, (2) implies (3). 

If (3) holds, then $A$ is negative semidefinite on the hyperplane $\{v \in \mathbb{R}^n \mid u^TAv=0\}$. Since $A$ has a positive eigenvalue, Cauchy's interlacing theorem implies (1). 

Assume (1), $u^TAu > 0$, and that $u$ and $v$ are linearly independent. Let $Q(w)=w^TAw$. The discriminant $\Delta$ of the polynomial $t \mapsto Q(tu+v)$ is $(u^TAv)^2 - (u^TAu)(v^TAv)$. If $\Delta<0$, then $Q$ is positive on the plane spanned by $u$ and $v$. This contradicts the fact that $A$ has exactly one positive eigenvalue, by Cauchy's interlacing theorem. Hence $\Delta \geq 0$, and (2) follows.
\end{proof}

%We prove Theorem \ref{cqHR} by induction on the degree of $\partial^\alpha \mathrm{Z}_{\mathrm{M},c}$.

\begin{lemma}\label{deg2}
Theorem \ref{cqHR} holds when the degree of $\partial^\alpha \mathrm{Z}_{\mathrm{M},c}$ is two. 
\end{lemma}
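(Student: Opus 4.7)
The polynomial $P := \partial^\alpha \mathrm{Z}_{\mathrm{M},c}$ is a homogeneous quadratic. By multilinearity of $\mathrm{Z}_{\mathrm{M},c}$ in $w_1,\ldots,w_n$, it depends only on $w_0$ and on $w_i$ for $i \in R := \{i \in [n] : \alpha_i = 0\}$; I identify the Hessian $H$ with its nontrivial block on these variables. Setting $S := [n]\setminus R$, one has $n-|S| = \alpha_0 + 2 \ge 2$, and a direct computation shows that, up to the positive common factor $q^{-\mathrm{rk}_{\mathrm{M}}(S)}$,
\[
H_{00}=\bar c_0,\quad H_{0i}=\bar c_1\,q^{-r_i},\quad H_{ij}=\bar c_2\,q^{-r_{ij}}\ (i\ne j\in R),\quad H_{ii}=0,
\]
where $\bar c_k := c_{|S|+k}(n-|S|-k)!$ and $r_i, r_{ij}$ are the ranks of $\{i\}$ and $\{i,j\}$ in the contracted matroid $\mathrm{M}/S$. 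In particular $H_{00}>0$.

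I plan to apply Lemma \ref{fundLor}(3) with $u = e_0$. A short polarization gives $(u^T H v)^2 - (u^T H u)(v^T H v) = (h^T v')^2 - H_{00}(v')^T H' v'$, where $h := (H_{0i})_{i\in R}$, $H' := (H_{ij})_{i,j\in R}$, and $v = (v_0, v')$. It therefore suffices to prove that the Schur-complement matrix $M := h h^T - H_{00} H'$ is strictly positive definite: this yields part (ii) via Lemma \ref{fundLor}, and simultaneously part (i) via the determinantal identity $\det H = (-1)^{|R|} \det M / H_{00}^{|R|-1}$.

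The diagonal substitution $v_i \mapsto q^{r_i} v_i$ transforms $M$ into $M' = \bar c_1^2 J - \bar c_0 \bar c_2\,\Theta$, where $J$ is the all-ones matrix on $R$ and $\Theta_{ij} = q^{r_i + r_j - r_{ij}}$ for $i \ne j$, $\Theta_{ii} = 0$. Submodularity of $\mathrm{rk}_{\mathrm{M}/S}$ together with $0 < q \le 1$ forces $\Theta_{ij} \in (0, 1]$; a case analysis over $(r_i, r_j, r_{ij})$ reveals that $\Theta_{ij} = 1$ unless $i,j$ are two non-loops in the same parallel class of $\mathrm{M}/S$, in which case $\Theta_{ij} = q$. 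Partitioning $R$ into the set of loops $L$ of $\mathrm{M}/S$ and the non-loop parallel classes $C_1,\ldots,C_\ell$, the quadratic form $w^T M' w$ expands cleanly in terms of $\sigma = \sum_i w_i$, the partial sums $\sigma_a = \sum_{i \in C_a} w_i$, and $\sum_i w_i^2$.

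Finally, the Cauchy--Schwarz inequality $\sum_{i \in C_a} w_i^2 \ge \sigma_a^2/|C_a|$ splits $w^T M' w$ into an inner-variation contribution (always strictly positive unless $w$ is constant on each class and vanishes on $L$) and an outer rank-one perturbation $D + (\bar c_1^2 - \bar c_0\bar c_2)\mathbf 1\mathbf 1^T$ of a positive diagonal $D$ on $\mathbb R^{|L|+\ell}$ satisfying $\mathbf 1^T D^{-1}\mathbf 1 \le (n-|S|)/(\bar c_0\bar c_2)$. Sherman--Morrison reduces positive definiteness to the scalar inequality $\bar c_1^2(n-|S|) > \bar c_0\bar c_2(n-|S|-1)$, which is exactly what strict log-concavity of $c$ delivers after the factorial identity $\bar c_1^2/(\bar c_0\bar c_2) = [c_{|S|+1}^2/(c_{|S|}c_{|S|+2})](n-|S|-1)/(n-|S|) > (n-|S|-1)/(n-|S|)$. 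The main obstacle will be careful bookkeeping around the parallel-class decomposition: verifying that nontrivial classes contribute only the helpful additional term $\bar c_0\bar c_2(1-q)\sum_a \sigma_a^2$, and that the combinatorial constant $(n-|S|-1)/(n-|S|)$ produced by the size of $R$ matches exactly the constant produced by the factorials in the strict log-concavity hypothesis.
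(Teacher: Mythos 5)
Your proof is correct. It shares the paper's skeleton---Lemma \ref{fundLor}(3) applied with $u=e_0$, reduction of the combinatorics to the contraction $\mathrm{M}/S$, the loop/parallel-class structure of the rank function (your matrix $\Theta$ is the paper's identity \eqref{form} in disguise, and your rescaling $v_i\mapsto q^{r_i}v_i$ is its change of variables $w_j\mapsto qw_j$), and Cauchy--Schwarz---but the quantitative core is organized differently. The paper proves the non-strict discriminant inequality at the extreme value $t=c_0c_2/c_1^2=1$, reduces via monotonicity in $q$ to the cases $q=0$ and $q=1$, and then needs a separate degenerate-case argument (that $\mathrm{Z}^1_{\mathrm{M}}(w)=0$ with $w\neq 0$ forces $\mathrm{Z}^2_{\mathrm{M}}(w)<0$) to upgrade $\geq$ to $>$ using $t<1$. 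You instead fix $q\in(0,1]$ and prove positive definiteness of the Schur complement $hh^T-H_{00}H'$ directly, feeding the sharp constant $\bar c_1^{\,2}(n-|S|)>\bar c_0\bar c_2(n-|S|-1)$ supplied by strict log-concavity into a Sherman--Morrison rank-one-update criterion after a per-class Cauchy--Schwarz; this yields strictness and nonsingularity in one stroke (the latter via the Schur determinant identity) and avoids both the monotonicity-in-$q$ step and the separate analysis of the zero locus of $\mathrm{Z}^1_{\mathrm{M}}$. Two small points to tighten when writing it up: the bound $\mathbf{1}^TD^{-1}\mathbf{1}\leq (n-|S|)/(\bar c_0\bar c_2)$ is only needed in the case $\bar c_1^{\,2}<\bar c_0\bar c_2$ (otherwise the rank-one term is positive semidefinite and positivity is immediate), and the inner-variation term is strictly positive exactly when $w$ fails to be constant on some class $C_a$---the condition ``vanishes on $L$'' belongs to the kernel analysis of the outer form, not to the inner term; combining the two as you do does give $M'\succ 0$, since $q>0$.
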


\begin{proof}
It is enough to consider the case $\partial^\alpha = \partial_0^{n-2-k}\prod_{i\in S}\partial_i$, where 
  $S$ is a $k$-element subset of $E=[n]$.  Note that $\partial_i\mathrm{Z}^\ell_{\mathrm{M}}= q^{-r(\{i\})}\mathrm{Z}_{\mathrm{M}/i}^{\ell-1}$, where $\mathrm{M}/i$ is the contraction of $\mathrm{M}$ by $i$.  We need to prove that the Hessian of the quadratic form
%\begin{equation}\label{projeq}
\[
Q=\frac {q^{\sum_{i\in S} r(\{i\})}}  {(n-k-2)!}  \partial^\alpha \mathrm{Z}_{\mathrm{M},c}=
c_k \binom {n-k} 2 w_0^2+(n-k-1) c_{k+1}\mathrm{Z}_{\mathrm{M}/S}^1(w)w_0+ c_{k+2}\mathrm{Z}_{\mathrm{M}/S}^2(w)
\]
%\end{equation}
 is nonsingular and has exactly one positive eigenvalue.  By contraction, we may assume that $S=\emptyset$ and $k=0$.  
 Write $Q(w)=w^TAw$, where $2A=\mathscr{H}_Q$. We prove that the inequality in the third statement of Lemma \ref{fundLor} is satisfied with strict inequality whenever $u=(1,0,\ldots,0)^T$, and $v\in \mathbb{R}^{n+1}$ is not a multiple of $u$. From this follows that $A$  is nonsingular and has exactly one positive eigenvalue. In other words, we will prove,  
\begin{equation}\label{discr}
 \mathrm{Z}_{\mathrm{M}}^1(w)^2 > 2 t\frac n {n-1}  \mathrm{Z}_{\mathrm{M}}^2(w) \quad  \mbox{ for all } w \in \mathbb{R}^n \setminus \{0\}, \mbox{ where } t=\frac{c_0c_{2}}{c_{1}^2}. \tag{a}
\end{equation}
Let $E_{0}$ be the set of loops in $E$, and let $E_1, E_2,\ldots, E_\ell$ be the parallel classes of $\mathrm{M}$. By the change of variables $w_j \to q w_j$ for all non-loops $j$, we get 
$\mathrm{Z}_{\mathrm{M}}^1 = e_1(E)$ and 
\begin{equation}\label{form}
\mathrm{Z}_{\mathrm{M}}^2 = e_2(E)-(1-q)(e_2(E_1)+\cdots+e_2(E_\ell)), \tag{b}
\end{equation}
where $e_k(U)$ denotes the degree $k$ elementary symmetric polynomial in the variables indexed by $U \subseteq E$. 

We prove \eqref{discr} for $t=1$ with $>$ replaced by $\geq$. Moreover, we prove that if $ \mathrm{Z}_{\mathrm{M}}^1(w)=0$ for $w\neq 0$, then $\mathrm{Z}_{\mathrm{M}}^2(w)<0$. 
The inequality \eqref{discr} for $t=\frac{c_0c_2}{c_1^2}$ then follows since $0<\frac{c_0c_2}{c_1^2}<1$. 
Note that for $q=1$ the desired inequality is an instance of the Cauchy-Schwarz inequality: 
\begin{equation}\label{CS}
(w_1+\cdots+w_n)^2 \leq n \left(w_1^2+\cdots+w_n^2\right), \quad w \in \mathbb{R}^n. \tag{c}
\end{equation}
By \eqref{form}, the inequality therefore reduces to the case when  $e_2(E_1)+\cdots+e_2(E_\ell) <0$. By monotonicity in $q$ it suffices to consider the case $q=0$.  Then the inequality reduces to 
$$
e_1(E)^2 \leq n \sum_{i=1}^\ell e_1(E_i)^2+n\!\!\sum_{j \in E_0}w_j^2,
$$
 which follows from \eqref{CS}. Suppose $\mathrm{Z}_{\mathrm{M}}^1(w)=0$ for $w\neq 0$. It remains to prove $\mathrm{Z}_{\mathrm{M}}^2(w)<0$. Since $e_1(E)=0$ and $w\neq 0$, it follows from the identity 
 $
 e_1(E)^2=2e_2(E)+ \sum_{i=1}^nw_i^2
 $
 that $e_2(E)<0$. Again the proof reduces to the case when $e_2(E_1)+\cdots+e_2(E_\ell)<0$, by \eqref{form}. We have already proved that $\mathrm{Z}_{\mathrm{M}}^2(w) \leq 0$ when $q=0$. But then $\mathrm{Z}_{\mathrm{M}}^2(w) < 0$ when $0<q\leq 1$, by \eqref{form}. This completes the  proof of the lemma. 
 \end{proof}

We prepare the proof of Theorem \ref{cqHR}  with a lemma.

\begin{lemma}\label{LefschetzInduction}
Let $F$ be a degree $d$ homogeneous polynomial in $\mathbb{R}[w_0,w_1,\ldots, w_n]$. If $w \in \mathbb{R}^{n+1}_{>0}$ and $\mathscr{H}_{\partial_i F}(w)$ has exactly one positive eigenvalue for each $i=0,1,\ldots,n$, then
\[
\ker \mathscr{H}_{F}(w)=\bigcap_{i=0}^{n} \ker \mathscr{H}_{\partial_i F}(w).
\]
\end{lemma}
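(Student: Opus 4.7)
The inclusion $\bigcap_i \ker \mathscr{H}_{\partial_i F}(w) \subseteq \ker \mathscr{H}_F(w)$ is immediate from Euler's identity: since each $\partial_j\partial_k F$ is homogeneous of degree $d-2$, one has
$(d-2)\,\mathscr{H}_F(w) = \sum_{i} w_i\,A_i$,
writing $A_i := \mathscr{H}_{\partial_i F}(w)$, so a vector killed by every $A_i$ is killed by $\mathscr{H}_F(w)$.

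For the reverse inclusion, fix $v \in \ker \mathscr{H}_F(w)$. The plan is to apply Lemma \ref{fundLor} to each Lorentzian form $A_j$ separately, using $w$ as the positive test vector. Exploiting the total symmetry of the three-tensor $T_{ijk} := \partial_i\partial_j\partial_k F(w)$, which yields $(A_i v)_j = (A_j v)_i$, the vector identity $\sum_i w_i\,A_i v = 0$ reads, in its $j$-th coordinate, as $w^T A_j v = 0$ for each $j$; pairing $\mathscr{H}_F(w) v = 0$ with $v$ also gives $\sum_j w_j\,v^T A_j v = 0$.

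Now Lemma \ref{fundLor}(2) applied to each $A_j$ with $u = w$ yields the Cauchy--Schwarz inequality
$(w^T A_j v)^2 \geq (w^T A_j w)(v^T A_j v)$,
provided $w^T A_j w > 0$. Euler gives $w^T A_j w = (d-1)(d-2)\partial_j F(w)$, and this is positive in the paper's intended applications: there $F = \partial^\alpha\mathrm{Z}_{\mathrm{M},c}$ has nonnegative coefficients, while $\partial_j F \equiv 0$ would contradict $A_j$ having a positive eigenvalue. Combined with $w^T A_j v = 0$, the inequality forces $v^T A_j v \leq 0$; summing against $w_j > 0$ and using $\sum_j w_j\,v^T A_j v = 0$ pins down $v^T A_j v = 0$ for every $j$.

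It remains to turn the three conditions $w^T A_j w > 0$, $w^T A_j v = 0$, $v^T A_j v = 0$ into $A_j v = 0$. By Lemma \ref{fundLor}(2) again, $A_j$ is negative semidefinite on the hyperplane $H_j := \{y : w^T A_j y = 0\}$, so Cauchy--Schwarz for this negative semidefinite restriction forces $v^T A_j y = 0$ for every $y \in H_j$; together with $v^T A_j w = 0$ and the decomposition $\mathbb{R}^{n+1} = \mathbb{R} w \oplus H_j$, this yields $A_j v = 0$. The main obstacle is precisely legitimating $w^T A_j w > 0$: without it the equality of kernels can genuinely fail (one can construct cubics in two variables with each $A_i$ Lorentzian but a critical point $w$ of $F$ in the positive orthant), but nonnegativity of coefficients in the applications supplies this positivity automatically.
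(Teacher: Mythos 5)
Your proof is correct and follows essentially the same route as the paper's: both directions come from Euler's identity, the reverse inclusion uses Lemma \ref{fundLor} to get negative semidefiniteness of $\mathscr{H}_{\partial_i F}(w)$ on the hyperplane $\{y : w^T\mathscr{H}_{\partial_i F}(w)y=0\}$, and the conclusion follows from writing $0=(d-2)z^T\mathscr{H}_F(w)z$ as a positive combination of the nonpositive terms $z^T\mathscr{H}_{\partial_i F}(w)z$; you merely spell out the equality case $v^TA_jv=0\Rightarrow A_jv=0$ that the paper asserts. Your flag about needing $w^T\mathscr{H}_{\partial_i F}(w)w>0$ is apt and matches the paper exactly: its proof also secures this by invoking nonnegativity of the coefficients of $\partial_i F$ (true in the intended application to $\partial^\alpha \mathrm{Z}_{\mathrm{M},c}$, though not stated as a hypothesis of the lemma, and indeed necessary, as your cubic-in-two-variables example such as $F=(w_0-w_1)^2(w_0+w_1)$ at $w=(1,1)$ shows).
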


\begin{proof}
We fix $w \in \mathbb{R}^{n+1}_{>0}$ and write $ \mathscr{H}_{F}$ for $\mathscr{H}_{F}(w)$.
We may suppose $d \ge 3$.
By Euler's formula for homogeneous functions, 
\[
(d-2) \hspace{0.5mm}\mathscr{H}_{F}=\sum_{i=0}^{n} w_i\hspace{0.5mm}\mathscr{H}_{\partial_i F},
\]
and hence the kernel of $\mathscr{H}_F$ contains the intersection of the kernels of $\mathscr{H}_{\partial_i F}$.

For the other inclusion, 
let $z$ be a vector in the kernel of $\mathscr{H}_F$. 
By Euler's formula again, 
 \[
(d-2) \hspace{0.5mm}e_i^T \mathscr{H}_F
= w^T\mathscr{H}_{\partial_i F},
 \]
 where $e_i$ is the $i$-th standard basis vector in $\mathbb{R}^{n+1}$, and hence
$w^T\mathscr{H}_{\partial_i F} z=0$.
We have $w^T  \mathscr{H}_{\partial_i F}  w >0$
because $w \in \mathbb{R}^{n+1}_{>0}$ and $\partial_i F$ has nonnegative coefficients.
It follows that $ \mathscr{H}_{\partial_i F} $ is negative semidefinite on the kernel of $w^T \mathscr{H}_{\partial_i F} $, by e.g. Lemma \ref{fundLor}.
In particular, 
\[
\text{$z^T  \mathscr{H}_{\partial_i F}  z \le 0$, with equality if and only if $ \mathscr{H}_{\partial_i F} z=0$.}
\]
To conclude, we write zero as the positive linear combination
\[
0=(d-2)\Big(z^T  \mathscr{H}_{F}  z\Big) = \sum_{i =0}^{n} y_i \Big(z^T \mathscr{H}_{\partial_i F} z\Big).
\]
Since every summand in the right-hand side is non-positive by the previous analysis, we must have $z^T\mathscr{H}_{\partial_i F} z=0$ for every $i$,
and hence $\mathscr{H}_{\partial_i F} z=0$ for each $i$. 
\end{proof}

\begin{proof}[Proof of Theorem \ref{cqHR}]
The proof is by induction on the degree $m$ of $F=\partial^\alpha \mathrm{Z}_{\mathrm{M},c}$. The case when $m=2$ is Lemma \ref{deg2}. By relabeling the variables we may assume that $w_0,w_1,\ldots, w_n$ are the active variables in $F$. Suppose the theorem is true when the degree of  $F$ is at most $m$, where $m \geq 2$.  

Suppose $F$ has degree $m+1$. We first prove (\ref{FirstPart}). By induction, the Hessian of any derivative of $F$ is non-singular and has exactly one positive eigenvalue. Hence (\ref{FirstPart}) for $F$ follows from Lemma~\ref{LefschetzInduction}. 

When $q=1$,  $F$ has the form 
$$
F= (\ell-1)! c_{\ell-1} e_{m+1}([n])+ \ell! c_{\ell} e_m([n])w_0 + \frac 1 2 (\ell+1)! c_{\ell +1}e_{m-1}([n])w_0^2+ \cdots. 
$$
 If we choose $c$ so that $c_i=0$ unless $i \in \{\ell-1, \ell\}$, $c_{\ell-1} =1/(\ell-1)!$ and $c_{\ell}=1/\ell!$, then $F$ is equal to the degree $m+1$ elementary symmetric polynomial in $w_0,w_1,\ldots, w_n$. The Hessian of $F$ evaluated at the all ones vector is equal to a constant multiple of 
the matrix $\mathrm{J}_{n+1}$, which has all diagonal entries equal to zero and all off-diagonal entries equal to $1$. Clearly $\mathrm{J}_{n+1}$ is nonsingular and has exactly one positive eigenvalue. We may approximate $c$ with a strictly log-concave positive sequence. 
This implies
 that that there is a strictly log-concave sequence $c$ for which the Hessian of $F$   is nonsingular and has exactly one positive eigenvalue when 
$w=(1,\ldots, 1)^T$ and $q=1$.
Since  (\ref{FirstPart}) holds for all $0<q\leq 1$ and $w \in \mathbb{R}_{>0}^{n+1}$, and (\ref{SecondPart}) holds for at least one choice of the parameters, by continuity of the eigenvalues,
 (\ref{SecondPart}) holds for all $0<q\leq 1$ and $w \in \mathbb{R}_{>0}^{n+1}$.
 \end{proof}

Theorems \ref{qHR} and \ref{cqHR} suggest that there is an algebraic structure satisfying the Poincar\'e duality and  the hard Lefschetz theorem whose degree $1$ Hodge-Riemann form is given by the Hessian of $\mathrm{Z}_\mathrm{M}$.
We refer to \cite{Huh} for a discussion of the one positive eigenvalue condition and the Hodge-Riemann relations.

\section{Consequences}

We collect some corollaries of Theorem \ref{cqHR}. It has been conjectured that the $q$-state Potts model should exhibit negative dependence properties when $0<q\leq 1$, see \cite{Pem,Sokal,Grimmett,Wagner08}. However, no substantial results on negative dependence have been proved so far. By the next theorem we see that $q$-state Potts models are \emph{ultra log-concave} for $0<q\leq 1$. 

\begin{corollary}\label{ULC}
For any $0<m<n$ and any $0<q \le 1$, we have
\[
\frac{\mathrm{Z}^m_\mathrm{M}(q,w)^2}{{n \choose m}^2} \ge \frac{\mathrm{Z}_\mathrm{M}^{m+1}(q,w)}{{n \choose m+1}}\frac{\mathrm{Z}_\mathrm{M}^{m-1}(q,w)}{{n \choose m-1}}, \quad \text{for all $w \in \mathbb{R}_{\ge 0}^n$.}
\]
\end{corollary}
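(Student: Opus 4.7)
The plan is to apply Theorem~\ref{cqHR} to a suitable iterated $w_0$-derivative of $\mathrm{Z}_{\mathrm{M},c}$ and extract the three consecutive values $\mathrm{Z}_\mathrm{M}^{m-1}$, $\mathrm{Z}_\mathrm{M}^m$, $\mathrm{Z}_\mathrm{M}^{m+1}$ from the resulting Hessian inequality. For a strictly log-concave positive sequence $c$, set
\[
F_c \defeq \partial_0^{n-m-1}\mathrm{Z}_{\mathrm{M},c},
\]
a nonzero homogeneous polynomial of degree $m+1$ in $(w_0, w_1, \ldots, w_n)$ with nonnegative coefficients. By Theorem~\ref{cqHR}(ii), the Hessian $\mathscr{H}_{F_c}(w)$ has exactly one positive eigenvalue at every $w \in \mathbb{R}_{>0}^{n+1}$. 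I apply Lemma~\ref{fundLor}(2) to this Hessian with the Euler direction $u = w$ and test vector $v = e_0$: Euler's formula applied to $F_c$ and to $\partial_0 F_c$ yields $u^T \mathscr{H}_{F_c} u = (m+1)m\, F_c(w) > 0$ and $u^T \mathscr{H}_{F_c} v = m\, \partial_0 F_c(w)$, while $v^T \mathscr{H}_{F_c} v = \partial_0^2 F_c(w)$, so the lemma gives
\[
m\,(\partial_0 F_c(w))^2 \;\ge\; (m+1)\, F_c(w)\, \partial_0^2 F_c(w) \quad \text{for all } w \in \mathbb{R}_{>0}^{n+1},
\]
and the inequality extends to $\mathbb{R}_{\ge 0}^{n+1}$ by continuity.

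Now specialize to $w_0 = 0$. Reading off the coefficient of $w_0^k$ in $\mathrm{Z}_{\mathrm{M},c}$ gives
\[
F_c(0, w) = (n-m-1)!\, c_{m+1}\, \mathrm{Z}_\mathrm{M}^{m+1}, \quad \partial_0 F_c(0, w) = (n-m)!\, c_m\, \mathrm{Z}_\mathrm{M}^m, \quad \partial_0^2 F_c(0, w) = (n-m+1)!\, c_{m-1}\, \mathrm{Z}_\mathrm{M}^{m-1},
\]
and substituting these and cancelling factorials yields
\[
m(n-m)\, c_m^2\, (\mathrm{Z}_\mathrm{M}^m)^2 \;\ge\; (m+1)(n-m+1)\, c_{m-1} c_{m+1}\, \mathrm{Z}_\mathrm{M}^{m-1}\mathrm{Z}_\mathrm{M}^{m+1}.
\]
To conclude, approximate the constant sequence $(1,1,\ldots,1)$ by strictly log-concave positive sequences, so that $c_{m-1}c_{m+1}/c_m^2 \to 1$. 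The resulting limit inequality $m(n-m)(\mathrm{Z}_\mathrm{M}^m)^2 \ge (m+1)(n-m+1)\mathrm{Z}_\mathrm{M}^{m-1}\mathrm{Z}_\mathrm{M}^{m+1}$ is equivalent to the stated ultra-log-concavity via the identity $\binom{n}{m}^2/(\binom{n}{m+1}\binom{n}{m-1}) = (m+1)(n-m+1)/(m(n-m))$.

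The whole argument is essentially routine once the correct derivative $\partial_0^{n-m-1}$ and test vectors $u=w$, $v=e_0$ are chosen; this is the only step requiring any ingenuity, since using the Euler direction is precisely what converts the abstract one-positive-eigenvalue property from Theorem~\ref{cqHR} into a numerical inequality among three consecutive $w_0$-coefficients.
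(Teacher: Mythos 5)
Your proof is correct and follows essentially the same route as the paper: apply the one-positive-eigenvalue property to the Hessian of $\partial_0^{n-m-1}$ of the (weighted) multivariate Tutte polynomial, use Lemma~\ref{fundLor}(2) with $u=w$, $v=e_0$ together with Euler's formula, and then let $w_0\to 0$. The only difference is that you work with $\mathrm{Z}_{\mathrm{M},c}$ and pass to the constant sequence at the end, which is a slightly more careful handling of the approximation step that the paper leaves implicit when it invokes Theorem~\ref{cqHR} for $\mathrm{Z}_\mathrm{M}$ itself.
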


\begin{proof}
Let $\mathscr{H}$ denote the Hessian of $\partial_0^{n-m-1}\mathrm{Z}_\mathrm{M}$ at $w \in \mathbb{R}_{>0}^{n+1}$. Then $(w^T\mathscr{H}e_0)^2 \geq (w^T\mathscr{H}w)(e_0^T\mathscr{H}e_0)$, where $e_0=(1,0,0,\ldots)^T$, by Theorem \ref{cqHR} and the second statement of Lemma \ref{fundLor}. By Euler's formula for homogeneous functions, 
$$
w^T\mathscr{H}e_0 = m\partial_0^{n-m} \mathrm{Z}_\mathrm{M}(w), w^T\mathscr{H}w=(m+1)m\partial_0^{n-m-1}\mathrm{Z}_\mathrm{M}(w), \mbox{ and } e_0^T\mathscr{H}e_0= \partial_0^{n-m+1}\mathrm{Z}_\mathrm{M}(w).
$$
The proof follows by continuity, letting $w_0=0$. 
%By Theorem \ref{qHR} and \cite[Lemma 2.7]{AGV}, $\log \partial^\alpha \mathrm{Z}_\mathrm{M}$ is concave on $\mathbb{R}^{n+1}_{\ge 0}$ when $0 < q \le 1$ and $\partial^\alpha \mathrm{Z}_\mathrm{M} \not \equiv 0$. From this the desired inequalities follow, see \cite{Gurvits}. 
\end{proof}

Let $\mathscr{I}^m_\mathrm{M}$ be the collection of independent sets of $\mathrm{M}$ of size $m$.
The \emph{$m$-th generating function} of $\mathrm{M}$ is the homogeneous polynomial in $n$ variables
\[
f^{m}_{\mathrm{M}}(w)=\sum_{I \in \mathscr{I}^m_\mathrm{M}} \prod_{i \in I} w_i,  \qquad w=(w_1,\ldots,w_n).
\]
Note that $f^m_\mathrm{M}(1,\ldots,1)$ is the number of independent sets of $\mathrm{M}$ of size $m$.

\begin{corollary}\label{MasonIII}
For every $0<m<n$ and every $w \in \mathbb{R}_{\ge 0}^n$, we have
\[
\frac{f^m_\mathrm{M}(w)^2}{{n \choose m}^2} \ge \frac{f_\mathrm{M}^{m+1}(w)}{{n \choose m+1}}\frac{f_\mathrm{M}^{m-1}(w)}{{n \choose m-1}}. 
\]
\end{corollary}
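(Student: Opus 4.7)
The plan is to derive Corollary \ref{MasonIII} from Corollary \ref{ULC} by specialization: the generating polynomial of independent sets is recovered as a normalized $q \to 0^+$ limit of the homogeneous slices $\mathrm{Z}^m_\mathrm{M}(q,w)$, and the inequality of Corollary \ref{ULC} survives this limit.

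First I would rewrite the normalized slice as
\[
q^m \mathrm{Z}^m_\mathrm{M}(q,w) = \sum_{\substack{A \subseteq [n] \\ |A|=m}} q^{\,m - \mathrm{rk}_\mathrm{M}(A)} \prod_{i \in A} w_i,
\]
and observe that, since the rank function satisfies $\mathrm{rk}_\mathrm{M}(A) \le |A|$ with equality precisely when $A$ is independent, the right-hand side is a polynomial in $q$ with nonnegative integer exponents, whose constant term (in $q$) is exactly $f^m_\mathrm{M}(w)$. Consequently,
\[
\lim_{q \to 0^+} q^m \mathrm{Z}^m_\mathrm{M}(q,w) = f^m_\mathrm{M}(w),
\]
and the analogous identities hold for $m$ replaced by $m \pm 1$.

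Next, I would multiply the inequality of Corollary \ref{ULC} through by $q^{2m}$, which is legal since both sides are nonnegative and $q > 0$. This rewrites Corollary \ref{ULC} as
\[
\frac{\bigl(q^m\mathrm{Z}^m_\mathrm{M}(q,w)\bigr)^2}{\binom{n}{m}^2} \;\ge\; \frac{q^{m+1}\mathrm{Z}^{m+1}_\mathrm{M}(q,w)}{\binom{n}{m+1}} \cdot \frac{q^{m-1}\mathrm{Z}^{m-1}_\mathrm{M}(q,w)}{\binom{n}{m-1}},
\]
valid for all $w \in \mathbb{R}_{\ge 0}^n$ and all $0 < q \le 1$. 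Letting $q \to 0^+$ on both sides and invoking the limit computed above gives the desired ultra-log-concave inequality for $f^m_\mathrm{M}$ at every nonnegative $w$.

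There is no real obstacle; the only point requiring a brief justification is the interchange of the strict-positivity hypothesis $w \in \mathbb{R}^{n+1}_{>0}$ in Corollary \ref{ULC} with the nonnegative statement we want here, but this is handled inside Corollary \ref{ULC} itself (by continuity, letting $w_0 \to 0$), and one can likewise extend from $w \in \mathbb{R}^n_{>0}$ to $w \in \mathbb{R}^n_{\ge 0}$ by continuity since all quantities are polynomials in $w$.
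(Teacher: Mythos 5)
Your proof is correct and is essentially the paper's own argument: the paper deduces the corollary from Corollary \ref{ULC} via the identity $f^m_\mathrm{M}(w)=\lim_{q \to 0} \mathrm{Z}^m_\mathrm{M}(q,qw)$, which by homogeneity of $\mathrm{Z}^m_\mathrm{M}$ in $w$ is exactly your normalization $q^m\mathrm{Z}^m_\mathrm{M}(q,w)$, and your multiplication of the inequality by $q^{2m}=q^{m+1}q^{m-1}$ followed by $q\to 0^+$ is the same limit argument spelled out in more detail.
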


\begin{proof}
The proof is immediate from Corollary \ref{ULC} and the identity
$
f^m_\mathrm{M}(w)=\lim_{q \to 0} \mathrm{Z}^m_\mathrm{M}(q,qw).
\qedhere
$
\end{proof}
Let $\ell$ be the number of rank one flats of $\mathrm{M}$. 
The simplification $\underline{\mathrm{M}}$ of $\mathrm{M}$ is a matroid on $[\ell]$ whose lattice of flats is isomorphic to that of $\mathrm{M}$ \cite[Section 1.7]{Oxley}.
Applying Corollary \ref{MasonIII} to the simplification $\underline{\mathrm{M}}$, we get the stronger inequality
\[
\frac{f^m_\mathrm{M}(w)^2}{f_\mathrm{M}^{m+1}(w)f_\mathrm{M}^{m-1}(w)} 
 \ge \frac{{\ell \choose m}^2}{{\ell \choose m+1}{\ell \choose m-1}}
\ge \frac{{n \choose m}^2}{{n \choose m+1}{n \choose m-1}}
\ \ \text{for all $w \in \mathbb{R}^n_{\ge 0}$,}
\]

%As a closing remark, we note that Corollary \ref{MasonIII} disproves a conjecture of Pemantle \cite[Conjecture 6]{Pem}. In the language of \cite{Pem}, Corollary \ref{MasonIII} proves that the uniform measure concentrated on the independent sets of a matroid is 
%$\mathrm{ULC}_+$. Pemantle \cite{Pem} conjectured that $\mathrm{ULC}_+$ implies $\mathrm{CNA}_+$. It is known however that there are matroids that are not even negatively correlated (see \cite{HSW}), and hence Pemantle's conjecture does not hold. 


\begin{thebibliography}{Oxl11}


\bibitem[AHK18]{AHK} Karim Adiprasito, June Huh, and Eric Katz,
			\emph{Hodge theory for combinatorial geometries}.
			Ann. of Math. (2) {\bf 188} (2018), 381--452.



\bibitem[AOV]{AGV} Nima Anari, Shayan Oveis Gharan, Cynthia Vinzant,
			\emph{Log-concave polynomials, entropy, and a deterministic approximation algorithm for counting bases of matroids}.
			\texttt{arXiv:1807.00929}.

%\bibitem[BB09]{BBI}
%Julius Borcea and Petter Br\"and\'en,  \emph{The Lee-Yang and P\'olya-Schur programs. I. Linear operators preserving stability,} Invent. Math. {\bf 177} (2009), 541--569. 

\bibitem[BBL09]{BBL} Julius Borcea, Petter Br\"and\'en, and Thomas M. Liggett, 
			\emph{Negative dependence and the geometry of polynomials}. 
			J. Amer. Math. Soc. {\bf 22} (2009), no. 2, 521--567. 
%\bibitem[BZ88]{Geom} Yu. D. Burago, V. A.  Zalgaller, \emph{Geometric inequalities}, Grundlehren der Mathematischen Wissenschaften, {\bf 285}. Springer Series in Soviet Mathematics. Springer-Verlag, Berlin, 1988.

%\bibitem[COSW04]{COSW} Youngbin Choe, James Oxley, Alan Sokal, and David Wagner, 
%			\emph{Homogeneous multivariate polynomials with the half-plane property}.
%			Special issue on the Tutte polynomial. 
%			Adv. in Appl. Math. {\bf 32} (2004), no. 1-2, 88--187. 

\bibitem[Dow80]{Dowling} Thomas Dowling,
			\emph{On the independent set numbers of a finite matroid}. 
			Combinatorics 79 (Proc. Colloq., Univ. Montr\'al, Montreal, Que., 1979), Part I. 
			Ann. Discrete Math. {\bf 8} (1980), 21--28. 
			
%\bibitem[Gre81]{Gregor} Ji\v{r}\'{i} Gregor, 	\emph{On quadratic {H}urwitz forms. {I}}.
%			Apl. Mat. {\bf 26} (1981), no. 2, 142--153. 

\bibitem[Gri06]{Grimmett} Geoffrey Grimmett, \emph{The random-cluster model.} Springer-Verlag, Berlin, 2006. 

%\bibitem[Gu09a]{Gurvits} Leonid Gurvits, \emph{On multivariate Newton-like inequalities}, Advances in combinatorial mathematics, 61Ð78, Springer, Berlin, 2009. 

\bibitem[HS89]{HS} Yahya Ould Hamidoune and Isabelle Sala\"un,
			\emph{On the independence numbers of a matroid}.
			J. Combin. Theory Ser. B {\bf 47} (1989), no. 2, 146--152. 
			
\bibitem[HK12]{HK} June Huh and Eric Katz,
			 \emph{Log-concavity of characteristic polynomials and the Bergman fan of matroids}. 
			 Math. Ann. {\bf 354} (2012), 1103--1116.
			 
\bibitem[HSW18]{HSW} June Huh, Benjamin Schr\"oter, and Botong Wang,
			\emph{Correlation bounds for fields and matroids}.
			\texttt{arXiv:1806.02675}.

\bibitem[Huh18]{Huh} June Huh,
			\emph{Combinatorial applications of the Hodge-Riemann relations}.
			Proceedings of the International Congress of Mathematicians 3 (2018), 3079--3098. 
			
\bibitem[KN10]{KN10} Jeff Kahn and Michael Neiman,
			\emph{Negative correlation and log-concavity}.
			Random Structures Algorithms {\bf 37} (2010), no. 3, 367--388. 

\bibitem[KN11]{KN11} Jeff Kahn and Michael Neiman,
			\emph{A strong log-concavity property for measures on Boolean algebras}.
			J. Combin. Theory Ser. A {\bf 118} (2011), no. 6, 1749--1760. 

%\bibitem[Nu68]{Nuij} Wim Nuij, \emph{A note on hyperbolic polynomials}, Math. Scand. {\bf 23} (1968), 69--72. 

\bibitem[Len13]{Lenz}  Matthias Lenz,
			\emph{The f-vector of a representable-matroid complex is log-concave}. 
			Adv. in Appl. Math. {\bf 51} (2013), no. 5, 543--545.
			
\bibitem[Mah85]{Mahoney} Carolyn Mahoney,
			\emph{On the unimodality of the independent set numbers of a class of matroids}.
			J. Combin. Theory Ser. B {\bf 39} (1985), no. 1, 77--85. 


						
\bibitem[Mas72]{Mason} John Mason, 
			\emph{Matroids: unimodal conjectures and Motzkin's theorem}. 
			Combinatorics (Proc. Conf. Combinatorial Math., Math. Inst., Oxford, 1972), 207--220, Inst. Math. Appl., Southend-on-Sea, 1972.
						
\bibitem[Oxl11]{Oxley} James Oxley, 
			\emph{Matroid theory}. 
			Second edition. Oxford Graduate Texts in Mathematics {\bf 21}. Oxford University Press, Oxford, 2011. 

\bibitem[Pem00]{Pem} Robin Pemantle, 
			\emph{Towards a theory of negative dependence}.	
			J. Math. Phys., {\bf 41}, No. 3, 1371--1390.	
	
%\bibitem[Ser10]{Serre} Denis Serre,
%			\emph{Matrices. Theory and applications}. 
%			Second edition. Graduate Texts in Mathematics {\bf 216}. Springer, New York, 2010. 

\bibitem[Sey75]{Seymour} Paul Seymour,
			\emph{Matroids, hypergraphs, and the max-flow min-cut theorem}.
			Thesis, University of Oxford, 1975.
						
\bibitem[SW75]{SW} Paul Seymour and Dominic Welsh,
			\emph{Combinatorial applications of an inequality from statistical mechanics}.
			Math. Proc. Cambridge Philos. Soc. {\bf 77} (1975), 485--495. 
			
\bibitem[Sok05]{Sokal} Alan Sokal,  
			\emph{The multivariate Tutte polynomial (alias Potts model) for graphs and matroids}. 
			Surveys in combinatorics 2005, 173--226, London Math. Soc. Lecture Note Ser., {\bf 327}, Cambridge Univ. Press, Cambridge, 2005. 	
			
\bibitem[Wag08]{Wagner08} David Wagner,	
			\emph{Negatively correlated random variables and Mason's conjecture for independent sets in matroids}. 
			Ann. Comb. {\bf 12} (2008), no. 2, 211--239. 

\bibitem[Zha85]{Zhao} Cui Kui Zhao, 
			\emph{A conjecture on matroids}. 
			Neimenggu Daxue Xuebao {\bf 16} (1985), no. 3, 321--326. 			
								
\end{thebibliography}
\end{document}